\DeclareMathOperator\Gal{Gal}
\DeclareMathOperator\Aut{Aut}
\DeclareMathOperator\core{core}
\DeclareMathOperator\PSL{PSL}
\DeclareMathOperator\PGL{PGL}
\DeclareMathOperator\GL{GL}
\DeclareMathOperator\Sym{Sym}
\DeclareMathOperator\Stab{Stab}
\DeclareMathOperator\Aff{Aff}
\DeclareMathOperator\Image{Im}
\DeclareMathOperator\HG{H}
\DeclareMathOperator\ZG{Z}
\DeclareMathOperator\tr{tr}
\DeclareMathOperator\res{res}
\DeclareMathOperator\cores{cor}
\begin{document}
\providecommand{\keywords}[1]{\textbf{\textit{Keywords: }} #1}
\newtheorem{theorem}{Theorem}[section]
\newtheorem{lemma}[theorem]{Lemma}
\newtheorem{prop}[theorem]{Proposition}
\newtheorem{cor}[theorem]{Corollary}
\newtheorem{problem}[theorem]{Problem}
\newtheorem{question}[theorem]{Question}
\newtheorem{conjecture}[theorem]{Conjecture}
\newtheorem{claim}[theorem]{Claim}
\newtheorem{defn}[theorem]{Definition} 
\theoremstyle{remark}
\newtheorem{remark}[theorem]{Remark}
\newtheorem{example}[theorem]{Example}
\newtheorem{condenum}{Condition}

\newcommand{\cc}{{\mathbb{C}}}   
\newcommand{\mQ}{{\mathbb{Q}}}   
\newcommand{\ff}{{\mathbb{F}}}  
\newcommand{\nn}{{\mathbb{N}}}   
\newcommand{\qq}{{\mathbb{Q}}}  
\newcommand{\rr}{{\mathbb{R}}}   
\newcommand{\zz}{{\mathbb{Z}}}  
\newcommand{\fp}{{\mathfrak{p}}}
\newcommand{\ra}{{\rightarrow}}
\newcommand{\divides}{\,|\,}

\newcommand\DN[1]{{\color{black} {#1}}}
\newcommand\JK[1]{{\color{red} {#1}}}
\newcommand\oline[1] {{\overline{#1}}}

\def\technion{Department of Mathematics, Technion - Israel Institute of Technology, Haifa, Israel}
\def\weizmann{Department of Mathematics, Weizmann Institute of Science, Rehovot, Israel}

\title{Sylow-conjugate number fields}
\author{Alexander Lubotzky}
\address{\weizmann}
\email{alexander.lubotzky@weizmann.ac.il}
\author{Danny Neftin}
\address{\technion}
\email{dneftin@technion.ac.il}%

\dedicatory{Dedicated to Moshe Jarden}
\begin{abstract}
By a classical result of Neukirch and Uchida, a number field $K$ is determined by the structure of its absolute Galois group $\Gal(K)$. 
We show that $K$ is not determined by the structure of the Sylow subgroups of $\Gal(K)$, answering a question raised by Florian Pop. 
\end{abstract}
\maketitle

\section{Introduction}
Let $K\subseteq \oline \mQ$ be a number field, i.e., a finite extension of the field of rational numbers $\mQ$ embeded in a fixed algebraic closure $\oline \mQ$ of $\mQ$. 
The seminal results of Neukirch and Uchida \cite{Neu,Uch} assert that $K$ is determined by the structure of its absolute Galois group $\Gal(K)=\Gal(\oline \mQ/K)$. 
Namely, if $L$ is a number field with $\Gal(L)$ isomorphic to $\Gal(K)$ as profinite groups, then $L$ is isomorphic to $K$. 
Moreover, already the structure of the maximal prosolvable quotient 
of $\Gal(K)$ determines $K$ \cite{Uch2}.  
Recent works of   Pop--Topaz \cite{PT}, Saidi--Tamagawa \cite{ST}, and others show that even much smaller quotients 
of $\Gal(K)$ determine $K$. 

The natural question whether the structure of the $p$-Sylow subgroups $\Gal(K)_p$ of $\Gal(K)$, where $p$ runs over all primes, already suffices to determine $K$ was raised by Florian Pop. The goal of this note is to show that this is not the case. To state it precisely, let us first define:
\begin{defn}\label{def:sylow-conjugate}
Two number fields $K$ and $L$ are said to be Sylow-conjugate if for every prime $p$, the $p$-Sylow subgroups of $\Gal(K)$ and $\Gal(L)$ are conjugate in $\Gal(\mQ)$. 
\end{defn}
In particular, the absolute Galois groups of two Sylow-conjugate number fields have isomorphic $p$-Sylow subgroups. Furthermore, it is not difficult to see (Corollary \ref{cor:Gal-degree} below) that two Sylow-conjugate number fields $K,L$ have the same degrees $[K:\mQ]=[L:\mQ]$ and the same Galois closure $M$ over $\mQ$. 
Still we will give many examples for which $K$ and $L$ are not isomorphic to each other. 
Here are some such pairs:
\begin{claim}\label{claim:examples}
The following pairs $(K,L)$ are Sylow-conjugate but not isomorphic: 
\begin{enumerate}
\item[(a)] $K=\mQ(\alpha)$ and $L=\mQ(\beta)$, where $\alpha$ and $\beta$, respectively, are the roots of: 
$$
p_7(x) = x^7-7x+3\text{ and }q_7(x)=x^7+14x^4-42x^2-21x+9.  
$$
\item[(b)] $K=\mQ(\alpha)$ and $L=\mQ(\beta)$, where $\alpha$ and $\beta$, respectively, are the roots of: 
$$\begin{array}{rl} 
p_8(x) & = x^8-4x^7-4x^6+26x^5+2x^4-52x^3+31x+1, \text{ and} \\
q_8(x) & = x^8 + 12x^7 + 30x^6 - 108x^5 - 402x^4 + 342x^3 + 1256x^2 -
    687x - 337. 
\end{array}
$$

\end{enumerate}
\end{claim}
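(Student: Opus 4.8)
The plan is to treat the two pairs by one scheme. For a pair $(K,L)$ one first identifies a common Galois closure $M$ of $K$ and $L$ over $\mathbb{Q}$ and puts $G=\Gal(M/\mathbb{Q})$, $H=\Gal(M/K)$, $H'=\Gal(M/L)$; it then suffices to establish two purely group-theoretic facts: (i) for every prime $p$, a $p$-Sylow subgroup of $H$ is conjugate in $G$ to a $p$-Sylow subgroup of $H'$; and (ii) $H$ and $H'$ are not conjugate in $G$. Fact (ii) gives $K\not\cong L$, since a field isomorphism $K\to L$ extends to an automorphism of $M$ fixing $\mathbb{Q}$ — an element of $G$ — which would conjugate $H$ onto $H'$. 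Fact (i) gives Sylow-conjugacy: fix a prime $p$ and a $p$-Sylow subgroup $P$ of $\Gal(\mathbb{Q})$ whose image under the restriction $\pi\colon\Gal(\mathbb{Q})\twoheadrightarrow G$ is a $p$-Sylow subgroup $\bar P$ of $G$; by (i), after replacing $K$ and $L$ by conjugate number fields, one may assume $H$ and $H'$ have a common $p$-Sylow subgroup $Q\le\bar P$, and then $(\pi|_P)^{-1}(Q)$ is simultaneously a $p$-Sylow subgroup of $\Gal(K)$ and of $\Gal(L)$ — its index in either one is prime to $p$ — so the two groups' $p$-Sylow subgroups literally coincide.

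For (a): I would verify — by factoring $q_7$ over $\mathbb{Q}[x]/(p_7)$ and conversely, and by a standard Galois group computation — that $p_7$ and $q_7$ are irreducible of degree $7$ with a common splitting field $M$, and that $G\cong\PSL(2,7)\cong\GL(3,2)$ acts on the roots of $p_7$, resp. of $q_7$, through its two inequivalent transitive degree-$7$ actions: on the points, resp. on the lines, of $\mathbb{P}^2(\mathbb{F}_2)$. Then $H$ and $H'$ are the point- and line-stabilisers, two non-conjugate subgroups isomorphic to $S_4$, giving (ii). For (i): $|G|=2^3\cdot 3\cdot 7$ while $|H|=|H'|=2^3\cdot 3$, so for $p\in\{2,3\}$ the $p$-parts of $|H|$ and $|H'|$ agree with that of $|G|$ — hence a $p$-Sylow subgroup of $H$ or of $H'$ is a $p$-Sylow subgroup of $G$, and all of these are conjugate — whereas for $p=7$ and for $p\nmid|G|$ the relevant $p$-Sylow subgroups are trivial. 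So (i) holds. (One can add that $H'=\theta(H)$ for the outer automorphism $\theta$ of $\PSL(2,7)$ and that a brief character computation gives $1_H^{\,G}=1_{H'}^{\,G}$, so $K$ and $L$ are in fact arithmetically equivalent.)

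For (b): I would verify likewise that $p_8$ and $q_8$ are irreducible of degree $8$ with a common splitting field $M$, and that $G\cong\GL(2,3)$, of order $48$, with $H$ and $H'$ the two non-conjugate index-$8$ subgroups isomorphic to $S_3$: concretely, the two $G$-conjugacy classes of complements to the centre $Z=\{\pm I\}$ in the order-$12$ preimage in $\GL(2,3)$ of a point-stabiliser of $\PGL(2,3)\cong S_4$ (acting on $\mathbb{P}^1(\mathbb{F}_3)$) — the two complements differing by the sign twist $S_3\to Z=\{\pm I\}$, and not fused in $\GL(2,3)$ since that preimage is self-normalising — which gives (ii). For (i): $|G|=2^4\cdot 3$; the $3$-Sylow subgroups of $H$ and of $H'$ are $3$-Sylow subgroups of $G$ (all conjugate), and the $2$-Sylow subgroups of $H$ and of $H'$ are generated by non-central involutions of $\GL(2,3)$ — and every non-central involution of $\GL(2,3)$ is conjugate to $\mathrm{diag}(1,-1)$ — so these are conjugate in $G$ as well; for $p\nmid|G|$ there is nothing to check. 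Thus (i) holds, and $K$ and $L$ are Sylow-conjugate.

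The group theory here is routine once $G$, $H$, $H'$ are in hand; the genuinely delicate point, which I expect to be the main obstacle, is the explicit verification that the listed polynomials realise exactly this data — above all that $p_7$ and $q_7$ (resp. $p_8$ and $q_8$) have a common splitting field and that the two associated subgroups land in distinct conjugacy classes (equivalently, for (a), that $q_7$ has no root in $\mathbb{Q}[x]/(p_7)$). This is a finite but not wholly transparent computation, most conveniently carried out with a computer algebra system.
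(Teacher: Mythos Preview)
Your proposal is correct and follows essentially the same approach as the paper: reduce to the group-theoretic criterion (the paper's Corollary~2.6), identify $G\cong\PSL_3(2)$ with point/line stabilisers in case~(a) and $G\cong\GL_2(3)$ with two non-conjugate $S_3$'s in case~(b), then verify Sylow-conjugacy and non-conjugacy of these subgroups. The only cosmetic differences are that the paper phrases the $\PSL_3(2)$ Sylow-conjugacy via parabolics sharing a common $\ell$-Sylow for $\ell\neq 2$ (rather than your Hall-subgroup observation), and establishes non-conjugacy in $\GL_2(3)$ by arguing directly that a conjugating element must normalise the common $3$-Sylow $P$ and then checking $A,B$ are non-conjugate in $N_G(P)$ --- equivalent to your ``self-normalising preimage'' argument once one notes $HZ=H'Z=N_G(P)$; like you, the paper defers the polynomial-level verification to references (Trinks) and computer computation.
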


In Section \ref{sec:negative}, we will show, using (mainly) group theoretic methods, how one can get many more such examples.  Some of these extensions are solvable, that is, admit a solvable Galois group $\Gal(M/\mQ)$, and some are not. For example in case (a) above the Galois group $\Gal(M/\mQ)$ is the nonsolvable group $\PSL_3(2)$, while in case (b)  it is the solvable group $\GL_2(3)$. 
We will see that the degree $7$ examples in (a) are of  minimal possible degree over $\mQ$, while in the examples in (b) the order of $
\Gal(M/\mQ)$ is $48$ and this is minimal. 
Along the way, we will see that in many cases, Sylow conjugacy implies conjugacy: for example, if $K$ is a solvable extension of prime degree, then it is determined by Sylow-conjugation, see Theorem \ref{thm:prime}.


Finally, Sylow-conjugation of number fields has some similarities with arithmetic equivalence, i.e.\ number fields with the same Dedekind zeta function. We discuss this in Section \ref{sec:art}, showing that they are still very different: neither one implies the other. We shall also see that there exist pairs $(K,L)$ which are both Sylow-conjugate and arithmetically equivalent and still not isomorphic.

This paper is dedicated to Moshe Jarden on his 80th birthday. Moshe is one of the leading figures of the area of Field Arithmetic and the founding father of this school in Israel. His work has had a lasting impact on both of us --
for which we are very grateful. 

We thank Robert Guralnick for valuable discussions. 
This project has received funding from the European Research Council (ERC) under the European Union's Horizon 2020 research and innovation programme (grant agreement No 882751) and from 
the Israel Science Foundation (grant no. 353/21). Both authors gratefully acknowledge the support and hospitality of the Institute for Advanced Study. All computer computations were carried out using MAGMA. 
\section{Nonisomorphic Sylow-conjugate number fields}\label{sec:negative}
\subsection{A group theoretic criterion}\label{sec:gp-theory}
Let us start with some notation and terminology. 
If $G$ is a profinite group, and $p$ is a prime, we will denote its $p$-Sylow subgroup by $G_p$. Subgroups of profinite groups will alway be assumed to be closed. We say that two subgroups $U,V\leq G$ are {\it Sylow-conjugate} if $U_p$ is conjugate to $V_p$ within $G$ for every prime $p$. 
\begin{lemma}\label{lem:quotients}
Let $G$ be a group, $N\lhd G$ a normal subgroup, and $U,V\leq G$ two subgroups containing $N$. Then: \\
(a) $U$ and $V$ are conjugate in $G$ if and only if $U/N$ and $V/N$ are conjugate in $G/N$. \\ 
(b) If $G$ is profinite, $U$ and $V$ are Sylow-conjugate in $G$ if and only if $U/N$ and $V/N$ are Sylow-conjugate in $G/N$.  
\end{lemma}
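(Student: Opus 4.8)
For part (a), the plan is to use the standard correspondence between subgroups of $G$ containing $N$ and subgroups of $G/N$. If $U = gVg^{-1}$ in $G$, then passing to the quotient gives $U/N = \bar g (V/N) \bar g^{-1}$ where $\bar g = gN$, since $N \lhd G$ ensures $gNg^{-1} = N \subseteq U$ and the image of $gVg^{-1}$ under the projection $\pi\colon G \to G/N$ is $\pi(g)\pi(V)\pi(g)^{-1} = \bar g (V/N)\bar g^{-1}$. Conversely, if $U/N = \bar g (V/N)\bar g^{-1}$ for some $\bar g \in G/N$, lift $\bar g$ to any $g \in G$; then $\pi(gVg^{-1}) = U/N = \pi(U)$, and since both $gVg^{-1}$ and $U$ contain $N = \ker\pi$, taking preimages gives $gVg^{-1} = \pi^{-1}(\pi(gVg^{-1})) = \pi^{-1}(\pi(U)) = U$. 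This direction is where one genuinely uses that $U \supseteq N$ (and, after conjugation, $gVg^{-1} \supseteq gNg^{-1} = N$).

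For part (b), the idea is to reduce the statement about Sylow subgroups to part (a), prime by prime. Fix a prime $p$. The key observation is that, because $N \subseteq U$, a $p$-Sylow subgroup $U_p$ of $U$ surjects onto a $p$-Sylow subgroup of $U/N$, and conversely $U_p N / N$ is a $p$-Sylow subgroup of $U/N$; moreover $U_p N$ is a subgroup of $U$ containing $N$ (it need not equal $U$, of course). I would phrase the bridge as follows: $U_p$ and $V_p$ are conjugate in $G$ if and only if the subgroups $U_p N$ and $V_p N$ — each containing $N$ — are "Sylow-conjugate at $p$" in a way that descends; more cleanly, I would first establish that for a profinite group $H$ with closed normal pro-$p'$-by-nothing... — actually the cleanest route avoids this. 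Instead: $U_p$ is conjugate to $V_p$ in $G$ $\iff$ $U_p N$ is conjugate to $V_p N$ in $G$ (since conjugation commutes with multiplying by the normal subgroup $N$, and a conjugacy of the $p$-Sylows extends to one of the products, while conversely a conjugacy $g(V_pN)g^{-1} = U_pN$ together with Sylow's theorem applied inside $U_pN$ — all $p$-Sylows of $U_pN$ are conjugate in $U_pN$, and $U_p$, $gV_pg^{-1}$ are both such — lets one adjust $g$ to conjugate $V_p$ onto $U_p$). Then apply part (a) to $U_pN$ and $V_pN$: these are conjugate in $G$ iff $U_pN/N$ and $V_pN/N$ are conjugate in $G/N$. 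Finally, $U_pN/N$ is a $p$-Sylow of $U/N$ and $V_pN/N$ is a $p$-Sylow of $V/N$, so the last condition is exactly that $(U/N)_p$ and $(V/N)_p$ are conjugate in $G/N$. Running this for every $p$ gives (b).

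The main technical point to handle carefully is the profinite Sylow theory: one must invoke that in a profinite group every prime has a (pro-)$p$-Sylow subgroup, that all such are conjugate, and that the image and the "preimage-type" constructions behave well — i.e.\ that $U_pN/N$ is genuinely a $p$-Sylow of $U/N$ and that $U_p \to U_pN/N$ is an isomorphism (its kernel is $U_p \cap N$, a pro-$p$ subgroup of $N$; but one also needs $U_p \cap N$ to be trivial or at least that this does not obstruct the conjugacy argument — in fact one does \emph{not} need $U_p\cap N$ trivial, only that all $p$-Sylows of $U_pN$ are $U_pN$-conjugate, which is the profinite Sylow theorem). The anticipated obstacle is purely bookkeeping: making the "adjust the conjugating element using Sylow's theorem inside $U_pN$" step rigorous in the profinite setting, where it rests on the known fact (Ribes–Zalesskii) that Sylow's conjugacy theorem holds for profinite groups. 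Everything else is the routine subgroup-correspondence argument of part (a).
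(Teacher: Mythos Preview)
Your argument is correct and follows essentially the same route as the paper: for (b) you use that $U_pN/N$ is a $p$-Sylow of $U/N$, lift a conjugacy of $U_pN/N$ and $V_pN/N$ to a conjugacy of $U_pN$ and $V_pN$ via part (a), and then adjust the conjugating element using Sylow's theorem inside $U_pN$---exactly as the paper does. The only difference is that the paper first proves (b) for finite $G$ and then passes to profinite $G$ by a standard inverse limit argument, whereas you invoke the profinite Sylow conjugacy theorem (Ribes--Zalesskii) directly; both are valid, and the paper's reduction avoids having to cite the profinite Sylow theorem explicitly.
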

\begin{proof}
(a) is clear. For (b): 
If $U$ and $V$ are Sylow-conjugate, then clearly so are $U/N$ and $V/N$, since the image of $U_p$ in $G/N$ is a $p$-Sylow subgroup of $U/N$. 
For the converse, first assume $G$ is finite. Since $U_pN/N$ is a $p$-Sylow subgroup of $U/N$, our assumption yields that $U_pN/N$ and $V_pN/N$ are conjugate in $G/N$. Thus $g^{-1}U_pNg = V_pN$ for some $g\in G$. As both $g^{-1}U_pg$ and $V_p$ are $p$-Sylow subgroups of $V_pN$, they are conjugate in $G$ and hence so are $U_p$ and $V_p$.  
This property then extends to profinite groups by a standard inverse limit argument. 
\end{proof}

Similarly, the following lemma is first verified easily for finite groups and then follows to profinite groups. For $U\leq G$, denote by $U^G=\bigcap_{g\in G} U^g$ the {\it core} of $U$ in $G$, that is, the maximal normal subgroup of $G$ contained in $U$. 
\begin{lemma}\label{lem:core-degree}
Let $G$ be a profinite group and $U,V$ two Sylow-conjugate subgroups of $G$. Then  $U^G=V^G$ and $[G:U]=[G:V]$. 
\end{lemma}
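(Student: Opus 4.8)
The plan is to reduce both assertions to the case of a finite group and settle them there. For the reduction, fix an open normal subgroup $N\lhd G$ and pass to the finite quotient $G/N$. The images $UN/N$ and $VN/N$ are Sylow-conjugate in $G/N$, because the image of a $p$-Sylow subgroup of $U$ is a $p$-Sylow subgroup of $UN/N$ and conjugacy in $G$ descends to conjugacy in $G/N$ (this is the easy direction of Lemma \ref{lem:quotients}(b)). Since $[G:U]=\sup_N[G/N:UN/N]$ as supernatural numbers and $U^G=\bigcap_N(UN)^G$ with $(UN)^G$ the preimage in $G$ of the core of $UN/N$ in $G/N$ — and the same for $V$ — it suffices to prove $[G/N:UN/N]=[G/N:VN/N]$ and $(UN/N)^{G/N}=(VN/N)^{G/N}$ for every such $N$. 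So I would henceforth assume $G$ is finite.

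For the index in the finite case: if $U_p$ and $V_p$ are conjugate in $G$ then $|U_p|=|V_p|$, and from $|U|=\prod_p|U_p|$ and $|V|=\prod_p|V_p|$ one gets $|U|=|V|$, hence $[G:U]=[G:V]$. For the core, set $N=U^G$, so that $N\lhd G$ and $N\leq U$. Fix a prime $p$ and pick $g\in G$ with $V_p=g^{-1}U_pg$. Since $N$ is normal in $U$, the intersection $N\cap U_p$ is a $p$-Sylow subgroup of $N$; conjugating by $g$ and using that $N$ is normal in all of $G$, we find $N\cap V_p=g^{-1}(N\cap U_p)g$ is a $p$-Sylow subgroup of $N^g=N$, and it lies inside $V$. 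As this holds for every prime $p$, and a finite group is generated by any family consisting of one $p$-Sylow subgroup for each prime $p$ (the order of the subgroup so generated is divisible by $|N|_p$ for every $p$, hence by $|N|$), I conclude $N=\langle\,N\cap V_p : p\ \text{prime}\,\rangle\leq V$. Being normal in $G$ and contained in $V$, $N$ lies in $V^G$, i.e.\ $U^G\leq V^G$; by symmetry $V^G\leq U^G$, so $U^G=V^G$.

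The one step I expect to require care is the core computation, specifically replacing the ``obvious'' $p$-Sylow subgroup $N\cap U_p$ of $N$ by $N\cap V_p$: this uses the normality of $N$ in the \emph{whole} group $G$ (not just in $U$), so that conjugation by an element $g\in G\setminus U$ still carries $p$-Sylow subgroups of $N$ to $p$-Sylow subgroups of $N$. The remaining ingredients — image of a Sylow is a Sylow, the index/core formulas for profinite groups, and the generation of a finite group by one Sylow per prime — are routine.
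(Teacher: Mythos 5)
Your proof is correct and follows exactly the route the paper intends: the paper only remarks that the lemma ``is first verified easily for finite groups and then follows to profinite groups,'' and your reduction via the quotients $G/N$ (using $[G:U]=\sup_N[G/N:UN/N]$ and $U^G=\bigcap_N (UN)^G$) together with your finite-group argument supplies precisely that. Your treatment of the core in the finite case --- $N\cap U_p$ is a $p$-Sylow of $N$, conjugate it into $V$, and generate --- is the same argument the authors themselves deploy inside the proof of Corollary \ref{cor:Gal-degree}, so there is nothing to add.
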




To translate these assertions to Sylow-conjugacy of number fields, we recall:
\begin{remark}\label{rem:fields}
Every isomorphism between two fields $K,L\subseteq \oline\mQ$ extends to an automorphism of $\Gal(\mQ)$, so that $K\cong L$ if and only if $\Gal(K)$ and $\Gal(L)$ are conjugate in $\Gal(\mQ)$. Letting $K^{(p)}$  denote the fixed field of a $p$-Sylow subgroup of $\Gal(K)$, it follows that $K$ and $L$  are Sylow-conjugate if and only if $K^{(p)}\cong L^{(p)}$ for all primes $p$. 
\end{remark}
In view of this remark, the above lemmas give:
\begin{prop}\label{prop:quotients}
Let $K$ and $L$ be two number fields, and $M/\mQ$ a Galois extension containing both. 
Let $U:=\Gal(M/K)$ and $V:=\Gal(M/L)$ be subgroups of $G:=\Gal(M/\mQ)$. 
Then: \\
(a) $K$ and $L$ are Sylow-conjugate if and only if $U$ and $V$ are Sylow-conjugate in $G$. \\
(b) $K$ and $L$ are isomorphic if and only if $U$ and $V$ are conjugate in $G$. 
\end{prop}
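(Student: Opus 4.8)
The plan is to translate the two field-theoretic statements into statements about closed subgroups of $\Gal(\mQ)$ via Remark~\ref{rem:fields}, and then descend to the quotient $G=\Gal(M/\mQ)$ by applying Lemma~\ref{lem:quotients}.

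First I set $N:=\Gal(\oline{\mQ}/M)$. Since $M/\mQ$ is Galois, $N$ is a closed normal subgroup of $\Gal(\mQ)$ with $\Gal(\mQ)/N\cong G$. Because $K,L\subseteq M$, every element of $N$ fixes $K$ and $L$, so $N\leq\Gal(K)$ and $N\leq\Gal(L)$; moreover $M/K$ and $M/L$ are Galois, so the quotient map $\Gal(\mQ)\to G$ carries $\Gal(K)$ onto $U=\Gal(M/K)\cong\Gal(K)/N$ and $\Gal(L)$ onto $V=\Gal(M/L)\cong\Gal(L)/N$. Thus $U$ and $V$ are exactly the images in $G$ of the subgroups $\Gal(K)\supseteq N$ and $\Gal(L)\supseteq N$ of $\Gal(\mQ)$, which places us in the setting of Lemma~\ref{lem:quotients} with ambient group $\Gal(\mQ)$ and normal subgroup $N$.

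Now part (b) follows by combining Remark~\ref{rem:fields}, which gives that $K\cong L$ if and only if $\Gal(K)$ and $\Gal(L)$ are conjugate in $\Gal(\mQ)$, with Lemma~\ref{lem:quotients}(a), which says this holds if and only if the images $U,V$ are conjugate in $G$. Similarly, part (a) follows by combining Remark~\ref{rem:fields}, which gives that $K$ and $L$ are Sylow-conjugate if and only if, for every prime $p$, a $p$-Sylow subgroup of $\Gal(K)$ is conjugate in $\Gal(\mQ)$ to one of $\Gal(L)$ — i.e.\ if and only if $\Gal(K)$ and $\Gal(L)$ are Sylow-conjugate as subgroups of $\Gal(\mQ)$ — with Lemma~\ref{lem:quotients}(b), which says this holds if and only if $U$ and $V$ are Sylow-conjugate in $G$. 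The only point requiring a moment's care is that the notion of Sylow-conjugacy for number fields in Definition~\ref{def:sylow-conjugate} literally matches the group-theoretic notion applied to subgroups of $\Gal(\mQ)$; this is immediate from the definitions (and from the observation, already used in the proof of Lemma~\ref{lem:quotients}(b), that the image of a $p$-Sylow subgroup of $\Gal(K)$ in $G$ is a $p$-Sylow subgroup of $U$). I do not anticipate any genuine obstacle here: the substance has been done in Lemma~\ref{lem:quotients} and Remark~\ref{rem:fields}, and what remains is bookkeeping.
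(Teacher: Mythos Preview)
Your argument is correct and follows essentially the same route as the paper: set $N=\Gal(\oline\mQ/M)\lhd\Gal(\mQ)$, note that $\Gal(K)/N\cong U$ and $\Gal(L)/N\cong V$, and then apply Remark~\ref{rem:fields} together with the two parts of Lemma~\ref{lem:quotients}. The only minor point is that the equivalence ``$K,L$ Sylow-conjugate $\Leftrightarrow$ $\Gal(K),\Gal(L)$ Sylow-conjugate in $\Gal(\mQ)$'' is literally Definition~\ref{def:sylow-conjugate} rather than something extracted from Remark~\ref{rem:fields}, but this does not affect the substance.
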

\begin{proof} (b) is given by Remark \ref{rem:fields} and Lemma \ref{lem:quotients}.(a). 
To see (a), note that by definition $K$ and $L$ are Sylow-conjugate if and only if $\Gal(K)$ and $\Gal(L)$ are. By Lemma \ref{lem:quotients}.(b), this happens if and only if $U$ and $V$ are Sylow-conjugate in $G$. 
\end{proof}
In particular, it follows that:
\begin{cor}\label{cor:Gal-degree}
Let $K$ and $L$ be two Sylow-conjugate number fields. Then $K/\mQ$ and $L/\mQ$ have the same Galois closure and the same degrees. 
\end{cor}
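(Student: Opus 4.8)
The plan is to reduce the statement to the purely group-theoretic Lemma~\ref{lem:core-degree} via Proposition~\ref{prop:quotients}. First I would fix a single finite Galois extension $M/\mQ$ containing both $K$ and $L$ — for instance the compositum of the Galois closures of $K/\mQ$ and of $L/\mQ$ — and set $G=\Gal(M/\mQ)$, $U=\Gal(M/K)$ and $V=\Gal(M/L)$, so that $U,V\leq G$.

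By Proposition~\ref{prop:quotients}(a), the hypothesis that $K$ and $L$ are Sylow-conjugate number fields is equivalent to $U$ and $V$ being Sylow-conjugate subgroups of $G$. Applying Lemma~\ref{lem:core-degree} to this situation gives both $[G:U]=[G:V]$ and $U^G=V^G$ at once. The index equality is immediately the degree statement: $[K:\mQ]=[G:U]=[G:V]=[L:\mQ]$.

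For the Galois closures, I would invoke the standard Galois-correspondence fact that the Galois closure of $K/\mQ$ inside $M$ is the fixed field of the core $U^G$ (the largest normal subgroup of $G$ contained in $U$), and similarly the Galois closure of $L/\mQ$ is the fixed field of $V^G$. Since $U^G=V^G$ these fixed fields coincide; and because $M$ was chosen to contain both Galois closures, this common fixed field is exactly the Galois closure of $K/\mQ$ and of $L/\mQ$. Hence the two Galois closures agree.

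I do not expect any genuine obstacle: all the real content sits in Lemma~\ref{lem:core-degree} and in the translation of Proposition~\ref{prop:quotients}. The only points needing a word of care are the choice of $M$ large enough to contain both Galois closures and the routine identification of the Galois closure of a subextension with the fixed field of the core of the corresponding subgroup.
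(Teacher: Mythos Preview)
Your proposal is correct and follows essentially the same approach as the paper: reduce via Proposition~\ref{prop:quotients} to the group-theoretic statement about cores and indices, then translate back through the Galois correspondence. The only difference is cosmetic---you invoke Lemma~\ref{lem:core-degree} directly, whereas the paper reproves its content inline within the proof of the corollary.
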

\begin{proof}
Let $M, G, U, V$ be as in Proposition \ref{prop:quotients}, so that $U$ and $V$ are Sylow-conjugate by the proposition. 
Recall that the core $C:=\core_G(U)$ is the largest subgroup of $U$ which is normal in $G$, so that $M^C$ is the Galois closure of $K/\mQ$. 
To show that the normal closure of $K/\mQ$ and $L/\mQ$ coincide, it suffices to show that $C$ and $D:=\core_G(V)$ coincide. 
Since every $p$-Sylow subgroup of $C$ is of the form $U_p\cap C$ and $C\lhd G$, the groups $U_p\cap C$ and $V_p\cap C$ are conjugate $p$-Sylow subgroups of $C$. 
Since $V$ contains a $p$-Sylow subgroup of $C$ for every $p$, it follows that $V$ and hence $D$ contain $C$. 
By symmetry, $C=D$, proving the claim. 

To show that $[K:\mQ]=[L:\mQ]$, note that as $U$ and $V$ are Sylow-conjugate, the largest $p$-powers dividing $|U|$ and $|V|$ coincide for every prime $p$. 
Thus, $|U|=|V|$ and hence $[K:\mQ]=|G|/|U|=|G|/|V|=[L:\mQ]$ as claimed. 
\end{proof}
Finally, Proposition \ref{prop:quotients}  gives the following recipe for producing examples of pairs $K$ and $L$ which are Sylow-conjugate but  not isomorphic. 
\begin{cor}\label{cor:criterion}
Let $G$ be a finite group which appears as the Galois group of a Galois extension $M/\mQ$. 
Assume $U$ and $V$ are Sylow-conjugate subgroups of $G$ that are nonconjugate, and let $K=M^U$ and $L=M^V$. 
Then $K$ and $L$ are nonisomorphic Sylow-conjugate number fields. 
\end{cor}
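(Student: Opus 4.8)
The plan is to apply Proposition \ref{prop:quotients} essentially verbatim, since its two clauses were set up precisely to convert the group-theoretic hypotheses on $U$ and $V$ into the field-theoretic conclusion about $K$ and $L$. First I would record that we are in the situation of that proposition: as $M/\mQ$ is Galois with group $G$, the Galois correspondence gives intermediate fields $K=M^U$ and $L=M^V$ with $\Gal(M/K)=U$ and $\Gal(M/L)=V$, and $M/\mQ$ is a Galois extension containing both $K$ and $L$. Note also that $K$ and $L$ are genuinely number fields, since $G$ is finite and hence $[K:\mQ]=[G:U]<\infty$ and $[L:\mQ]=[G:V]<\infty$.

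Next I would invoke Proposition \ref{prop:quotients}(a): $K$ and $L$ are Sylow-conjugate if and only if $U$ and $V$ are Sylow-conjugate in $G$. Since the latter holds by hypothesis, $K$ and $L$ are Sylow-conjugate. Then I would invoke Proposition \ref{prop:quotients}(b): $K$ and $L$ are isomorphic if and only if $U$ and $V$ are conjugate in $G$. Since $U$ and $V$ are assumed nonconjugate, $K$ and $L$ are nonisomorphic. Combining the two statements yields the corollary.

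There is no real obstacle here: the result is a formal consequence of the structural lemmas and Proposition \ref{prop:quotients} already in hand, so the only care needed is the bookkeeping of the Galois correspondence and the finiteness remark above. One might optionally add, though it is not required for the statement, that Corollary \ref{cor:Gal-degree} (equivalently Lemma \ref{lem:core-degree} applied to $U,V\leq G$) then automatically gives $[K:\mQ]=[L:\mQ]$ and a common Galois closure for $K/\mQ$ and $L/\mQ$, so that the examples produced by this recipe are as constrained as one expects.
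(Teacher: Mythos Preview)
Your proposal is correct and matches the paper's approach exactly: the paper does not even spell out a proof, simply noting that Proposition~\ref{prop:quotients} ``gives the following recipe,'' which is precisely the two-step application of parts (a) and (b) that you describe. Your additional bookkeeping (Galois correspondence, finiteness of $[K:\mQ]$ and $[L:\mQ]$) is harmless and arguably clarifying.
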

A well known conjecture asserts that every finite group appears as the Galois group of some Galois extension of $\mQ$.

\subsection{Examples}\label{sec:poly}

One can produce many examples of tuples $(G,U,V)$ satisfying the conditions of Corollary \ref{cor:criterion}. Here are some ways to do so:

\begin{example}
Recall that for a set of primes $\Pi$, a subgroup $H$ of a finite group $G$ is called a $\Pi$-Hall subgroup if all prime divisors of $|H|$ are in $\Pi$, while all prime divisors of $[G:H]$ are not in $\Pi$. Note that two $\Pi$-Hall subgroups $U,V\leq G$ are always Sylow-conjugate by  Sylow's theorem.  
Thus, every group $G$ which appears as a Galois group over $\mQ$ and has two nonconjugate $\Pi$-Hall subgroups fits into Corollary \ref{cor:criterion}.

 For example, one may pick $G=\PSL_2(11)$ and $\Pi=\{2,3\}$. In this case $G$ contains copies $U$ and $V$ of $A_4$ and the Dihedral group $D_6$ (of order $12$), respectively, as $\Pi$-Hall subgroups. The subgroups $U$ and $V$ are clearly nonconjugate in $G$ since they are nonisomorphic. There are many polynomials whose splitting field $M$ has Galois group $\Gal(M/\mQ)\cong \PSL_2(11)$. For example, Malle--Matzat  \cite[Satz 4]{MM85} show that 
 $$\begin{array}{rl}
 f_{11}(t,x) = &  2x^{11} - 2541x^9 - 45254x^8 + 1026201x^7 + 51653448x^6 
+ 900904653x^5  \\ 
& + 8705450754x^4 + 50915146293x^3 + 180040201308x^2 + 355871173680x  \\ 
& + 303064483392 -2t(x^3 + 22x^2 + 165x + 396)^2(x^3 - 22x^2 - 319x-924)
\end{array}
$$ 
has Galois group $G$ over $\mQ(t)$, and hence by Hilbert's irreducibility theorem $f(t_0,x)$ has Galois group $G$ over $\mQ$ for infinitely many values $t_0\in \mQ$. Taking $M$ to be the splitting field of such $f(t_0,x)$ yields examples of a pair $K=M^U$ and $L=M^V$ of nonisomorphic Sylow-conjugate number fields. 

We note that such examples do not exist when $G$ is solvable since a classical result of P.\ Hall \cite[Theorem 9.3.1]{Hall} asserts that $G$ has $\Pi$-Hall subgroups and that these are all conjugate. 
\end{example}

\begin{example}\label{exam:psl11}
In fact $G=\PSL_2(11)$ has two nonconjugate subgroups $U,V$ isomorphic to $A_5$, see Appendix by Feit in \cite{Lang}. These $U,V$ are $\{2,3,5\}$-Hall subgroups of $G$.  Thus, letting $M$ be a splitting field of the above polynomial $f_{11}(2,x)$, we again have that $K=M^U$ and $L=M^V$ are nonisomorphic but Sylow-conjugate. Explicitly, a computation using MAGMA shows that $K$ and $L$ are realized as the root fields of the polynomials: 
$$\begin{array}{rl}
p_{11}(x) = &  x^{11} - 5090x^9 + 181368x^8 + 8224744x^7 - 828043392x^6 +
    28884349472x^5 \\ 
    & - 558216962688x^4 + 6529632151680x^3 -
    46178757504000x^2 \\ & + 182555783258112x - 310931533135872, \text{ and } \\
q_{11}(x) = & x^{11} - 15270x^9 + 586696x^8 + 44022852x^7 - 3226512608x^6 +
    230394820408x^5 \\ & - 12244387399904x^4 + 151382377029664x^3 -
    3610663124873728x^2 \\ & + 20030100743110656x - 31953398556524544.
    \end{array}
    $$
\end{example}

\begin{example}\label{exam:PSLd}
Let $F$ be a finite field whose order $q$ is a power of a prime $p$, let $d\geq 3$ be an integer, and $G=\PSL_d(q)$. 
Let $U$ be the stabilizer of some fixed $1$-dimensional subspace, i.e.\ $U=\Stab_G(\mathbb F_q\cdot e_1)$, where $e_1,\ldots,e_d$ is a basis for $\mathbb F_q^d$. 
Let $V$ be the stabilize of the hyperplane $W$ spanned by $e_2,\ldots,e_d$. 
Then $U$ and $V$ are maximal parabolic subgroups of $G$ which are nonconjugate in $G$ (although they are isomorphic and in fact conjugate under an outer automorphism of $G$). 

We claim that $U$ and $V$ are Sylow-conjugate in $G$. To see this, note that the $p$-Sylow subgroup of $G$ is contained in a Borel subgroup and hence each parabolic subgroup contains a $p$-Sylow subgroup. In particular, $U$ and $V$ contain $p$-Sylow subgroups of $G$ which are necessarily conjugate in $G$. For every other prime $\ell\neq p$, the subgroups $U$ and $V$ contain a common $\ell$-Sylow subgroup of $G$, so that $U$ and $V$ are Sylow-conjugates. 

We note that the groups $\PSL_d(q)$ are known to appear as Galois groups $\Gal(M/\mQ)$ for many pairs $(d,q)$, but not in general. Here is a an especially interesting case: 
\end{example}
\begin{example}
Let $G=\PSL_3(2)$, and $U$ and $V$ be its index-$7$ subgroups from Example \ref{exam:PSLd}. Note that $G\cong \PSL_2(7)$ as abstract groups. 
It is well known that $G$ appears as a Galois group over $\mQ$. Moreover, $K=M^U$ and $L=M^V$ can be chosen to be the fields $\mQ(\alpha)$ and $\mQ(\beta)$, respectively, where $\alpha$ and $\beta$ are the  roots of the pair of polynomials $p_7,q_7$ 
from Claim \ref{claim:examples}.(a) given by Trinks \cite{Trin}, or roots of one of the pairs of polynomials given by Erbach--Fischer--McKay \cite{EFM}, e.g.: 
$$
u_7(x):= x^7 - 154x + 99, \text{ and }v_7(x):=x^7 - 231x^3 - 462x^2 + 77x + 66. 
$$
In Section \ref{sec:positive}, we will show that this is a minimal example in the sense that there is no pair of nonisomorphic Sylow-conjugate fields whose degree over $\mQ$ is less than $7$. 
\end{example}
\begin{example}\label{exam:regular}
Let $U$ and $V$ be two nonisomorphic finite groups satisfying: for every prime $p$, the $p$-Sylow subgroup of $U$ is isomorphic to the $p$-Sylow subgroup of $V$. There are many such examples: e.g.\ $U$ is the cyclic group of order $2\ell$, $\ell$ prime (or any odd number), and $V$ the Dihedral group of order $2\ell$. 

Embed both $U$ and $V$ into $S_n$, for $n=|U|=|V|$, via the regular permutation representation. One can see that for a $p$-Sylow subgroup $P$ of $U$ (or $V$), the permutation representation is a union of $|U|/|P|$ copies of the regular representation of $P$. Thus $U_p$ and $V_p$ are conjugate within $S_n$. It is classical that $S_n$ is the Galois group of many extensions $M/\mQ$, so that the triples $(S_n,U,V)$ give many pairs $K=M^U$ and $L=M^V$ of nonisomorphic Sylow-conjugates. 
\end{example}

Up to now, all of our examples were nonsolvable. One can produce also solvable examples such as the following. 
\begin{example} Let $G=\GL_2(3)$ be a group of order $48=16\times 3$. Letting $P$ be the $3$-Sylow subgroup generated by 
$$
\left[\begin{array}{cc}
 1 & 1 \\
 0 &  1 \\
\end{array}\right].
$$
We let $U:=\langle P,A\rangle$ and $V:=\langle P,B\rangle$, where 
$$ A:=\left[\begin{array}{cc}
 1 & 0 \\
 0 &  -1 \\
\end{array}\right] 
\text{ and }B:=\left[\begin{array}{cc}
 -1 & 0 \\
 0 &  1 \\
\end{array}\right],$$
are conjugate involutions normalizing $P$, so that $U$ and $V$ are Sylow-conjugate. 
We claim that $U$ and $V$ are nonconjugate. 
Indeed, as their $3$-Sylow subgroups coincide and are unique in each of them, if $U$ and $V$ are conjugate, they are conjugate in the normalizer $N:=N_G(P)$ of $P$. 
Since the number of $3$-Sylow subgroups in $G$ is $[G:N_G(P)]=4$ (e.g.\ using the Sylow theorems), we see that $|N_G(P)|=12$, and hence $N_G(P)$ is generated by $A,B$ and $P$. However, $A$ and $B$ are nonconjugate in $\langle A,B,P\rangle=N_G(P)$ (which consists of upper triangular matrices), proving the claim. 

By Shafarevich's theorem, every solvable group appears as the Galois group of a polynomial. The polynomial $p_8$ in Claim \ref{claim:examples} is a well known example of a polynomial with Galois group $\GL_2(3)$ over $\mQ$ with point stabilizer $S_3$.  
A direct computation shows that a root field of the polynomial $q_8$ from the claim is the fixed field of a nonconjugate copy of $S_3$. 
In Section \ref{sec:positive}, we will show that in this example $G$ is of smallest possible order. 
\end{example} 


\section{When Sylow-conjugation implies  isomorphism}\label{sec:positive}
In this section, we state conditions under which Sylow conjugation does imply isomorphism. 
We first show that two Sylow-conjugate number fields $K$ and $L$ of prime degree $p$ over $\mQ$ are ``usually" isomorphic. 
The following result makes use of the classification of finite simple groups (CFSG). 
\begin{theorem}\label{thm:prime}
Let $K,L$ be two Sylow-conjugate number fields of prime degree $p$ over $\mQ$. 
Then $K$ and $L$ are isomorphic unless their common Galois closure  $M/\mQ$ satisfies one of the following:
\begin{enumerate}
\item[(a)]  $p=11$, $\Gal(M/\mQ)\cong \PSL_2(11)$, and $K$ and $L$ are number fields of the type described in Example \ref{exam:psl11} (the fixed fields of the two different  conjugation classes of $A_5$ in $\PSL_2(11)$). 
\item[(b)] There exists a prime $d\geq 3$ and a prime power $q$, such that 
\begin{equation}\label{equ:sum-primes}
p=\frac{q^d-1}{q-1},
\end{equation} 
and $\Gal(M/\mQ)$ is an almost simple group with socle  $\PSL_d(q)$. Here
  $K$ and $L$ are the fixed fields of two nonconjugate subgroups $U'$ and $V'$ in $G$ whose intersection with $\PSL_d(q)$ are equal to the two maximal parabolic subgroups described in Example \ref{exam:PSLd}. 
\end{enumerate}
\end{theorem}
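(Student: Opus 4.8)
The plan is to pass to group theory via Proposition~\ref{prop:quotients} and then to invoke the classification of transitive permutation groups of prime degree. I would let $M$ be the common Galois closure of $K$ and $L$ (these coincide by Corollary~\ref{cor:Gal-degree}) and set $G=\Gal(M/\mQ)$, $U=\Gal(M/K)$, $V=\Gal(M/L)$. Since $M$ is the Galois closure of each of $K$ and $L$, the subgroups $U$ and $V$ are core-free of index $p$ in $G$, so $G$ acts faithfully and transitively on the set of $p$ cosets of $U$ (and similarly for $V$); by Proposition~\ref{prop:quotients}, $U$ and $V$ are Sylow-conjugate in $G$, and $K\cong L$ if and only if $U$ and $V$ are conjugate in $G$. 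Thus the task reduces to showing that a transitive permutation group $G$ of prime degree $p$ has a single conjugacy class of index-$p$ subgroups unless $G$, with its action, is as in~(a) or~(b). I would first note that here Sylow-conjugacy is automatic and hence carries no information: since $G\hookrightarrow S_p$ we have $|G|_p=p$, so every index-$p$ subgroup of $G$ has trivial $p$-Sylow subgroup, while for every prime $s\neq p$ an index-$p$ subgroup, having index coprime to $s$, contains a full $s$-Sylow subgroup of $G$; hence any two core-free index-$p$ subgroups of $G$ are Sylow-conjugate, and the real content is the conjugacy statement.

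Next I would invoke Burnside's theorem: $G$ is either solvable, in which case $G\leq\mathrm{AGL}_1(p)$, or $G$ is $2$-transitive. In the solvable case $G=C_p\rtimes C_m$ with $m\mid p-1$, and the index-$p$ subgroups of $G$ are precisely its Hall $p'$-subgroups, of order $m$; these are all conjugate by P.~Hall's theorem \cite[Theorem~9.3.1]{Hall}, so $U$ and $V$ are conjugate and $K\cong L$. (This is also how I would prove the remark from the introduction that a solvable prime-degree extension is determined by Sylow-conjugation.)

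The heart of the proof is the $2$-transitive case, where CFSG is used. By the classification of $2$-transitive permutation groups of prime degree, $G$ together with its action is one of: $A_p$ or $S_p$ in the natural action; $M_{11}$ on $11$ points; $M_{23}$ on $23$ points; $\PSL_2(11)$ on $11$ points; or an almost simple group with socle $\PSL_d(q)$, so that $\PSL_d(q)\trianglelefteq G\leq\mathrm{P\Gamma L}_d(q)$, acting on the $p=(q^d-1)/(q-1)$ points or hyperplanes of $\mathrm{PG}(d-1,q)$, where $d$ is necessarily prime since otherwise $(q^d-1)/(q-1)$ factors. Now $p$ being prime, every index-$p$ subgroup of $G$ is maximal, and, $G$ being nonsolvable, gives by Burnside a $2$-transitive action; hence the conjugacy classes of index-$p$ subgroups of $G$ correspond to the equivalence classes of faithful $2$-transitive actions of $G$ of degree $p$, which the classification makes explicit. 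For $A_p$, $S_p$, $M_{11}$ and $M_{23}$ there is a single such action (point stabilizers $A_{p-1}$, $S_{p-1}$, $M_{10}$, $M_{22}$ respectively), so $U$ and $V$ are conjugate; the same holds when the socle is $\PSL_2(q)$ and the action is on the projective line, where the point stabilizer is a Borel subgroup. When the socle is $\PSL_d(q)$ with $d\geq3$ there are exactly two classes, represented by the maximal parabolics $P_1$ (point stabilizers) and $P_{d-1}$ (hyperplane stabilizers) of Example~\ref{exam:PSLd}; these are interchanged only by the graph automorphism of $\PSL_d(q)$, which is not induced by any element of $\mathrm{P\Gamma L}_d(q)$, so they remain non-conjugate in $G$ — this is case~(b), with $U\cap\PSL_d(q)$ and $V\cap\PSL_d(q)$ the two parabolics. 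Finally, for $\PSL_2(11)$ on $11$ points the classification gives $G=\PSL_2(11)$ and exactly two classes of index-$11$ subgroups, both isomorphic to $A_5$ (the appendix by Feit in \cite{Lang}) — this is case~(a). In all remaining situations $U$ and $V$ are conjugate, so $K\cong L$, which is the desired conclusion.

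I expect the crux to be the $2$-transitive case: invoking the CFSG-based classification of $2$-transitive groups of prime degree and then, family by family, reading off the number of conjugacy classes of index-$p$ subgroups — in particular checking that the two parabolic classes of $\PSL_d(q)$ (for $d\geq3$) stay unfused inside $\mathrm{P\Gamma L}_d(q)$, and that $\PSL_2(11)$ has exactly two classes of subgroups isomorphic to $A_5$. The solvable case, and the alternating, symmetric, and Mathieu cases, are routine; and, as observed at the outset, the Sylow-conjugacy hypothesis adds nothing beyond what Corollary~\ref{cor:Gal-degree} already provides.
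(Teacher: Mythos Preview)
Your proof is correct and follows essentially the same route as the paper: reduce to group theory, apply Burnside's dichotomy, dispose of the solvable case (you use P.~Hall's theorem, the paper uses $\HG^1(C,\mathbb F_p)=0$), and in the almost simple case run through the CFSG list---the paper cites Guralnick's classification \cite{Gur} of simple groups with a subgroup of prime index, while you phrase it as the classification of $2$-transitive groups of prime degree, which is the same information. Your added remark that any two core-free index-$p$ subgroups of $G\le S_p$ are automatically Sylow-conjugate (so the hypothesis contributes nothing beyond Corollary~\ref{cor:Gal-degree}) is a nice observation that the paper does not make explicit.
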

In particular, the following is the direct consequence for solvable groups, which however does not require the classification: 
\begin{cor}
If $K$ and $L$ are solvable (i.e.\ $\Gal(M/\mQ)$ is solvable) Sylow-conjugate extensions of prime degree, then they are isomorphic. 
\end{cor}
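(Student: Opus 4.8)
The plan is to translate the statement into a purely group-theoretic fact and then dispatch it, for solvable groups, with two classical inputs: the structure of solvable transitive permutation groups of prime degree, and P.\ Hall's conjugacy theorem for Hall subgroups. Crucially I would \emph{not} invoke Theorem \ref{thm:prime} itself, whose general proof uses the classification of finite simple groups; the point of the corollary is that the solvable case is elementary, so I would isolate exactly that argument. First I would let $M$ be the common Galois closure of $K$ and $L$ over $\mQ$, which exists by Corollary \ref{cor:Gal-degree}, set $G=\Gal(M/\mQ)$ (solvable, by hypothesis) and $U=\Gal(M/K)$, $V=\Gal(M/L)$. By Corollary \ref{cor:Gal-degree} the degrees agree, so $[G:U]=[G:V]=p$; and since $M$ is the Galois closure of each of $K$ and $L$, the cores $\core_G(U)$ and $\core_G(V)$ are trivial, so $G$ acts faithfully and transitively on the $p$ cosets of $U$. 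By Proposition \ref{prop:quotients}.(b) it then suffices to show $U$ and $V$ are conjugate in $G$.

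The key step is that a Sylow $p$-subgroup $P$ of $G$ has order exactly $p$. One may quote the classical theorem of Galois that a solvable transitive permutation group of prime degree $p$ is, up to conjugacy, contained in the affine group $\Aff_1(\ff_p)$ of maps $x\mapsto ax+b$, which has order $p(p-1)$; then $p$ exactly divides $|G|$. Alternatively, and self-containedly, I would argue directly: in the action of $P$ on the $p$ cosets of $U$ every orbit has $p$-power size dividing $p$, hence size $1$ or $p$; since $p\mid |G|$ the group $P$ is nontrivial, so some orbit has size $p$ and $P$ is transitive; a point stabilizer in $P$ then has index $p$, hence is normal in $P$ and lies in the kernel of the action, which is trivial, so $|P|=p$. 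Either way $|U|=|V|=|G|/p$ is prime to $p$, so $U$ and $V$ are Hall $\{\,q : q\neq p\,\}$-subgroups of $G$. Applying P.\ Hall's theorem \cite[Theorem 9.3.1]{Hall} to the solvable group $G$, all such Hall subgroups are conjugate; hence $U$ and $V$ are conjugate in $G$, and the corollary follows.

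I do not expect a genuine obstacle: the whole content is the reduction to a core-free subgroup of prime index in a solvable group, plus the order-$p$ Sylow observation and Hall's conjugacy theorem. The only point worth stressing is that this route deliberately avoids the classification of finite simple groups used in Theorem \ref{thm:prime}; in fact the argument proves slightly more, namely that \emph{any} two solvable number fields of prime degree with a common Galois closure are isomorphic, so Sylow-conjugacy enters the proof only through Corollary \ref{cor:Gal-degree}, which is what supplies that common Galois closure (and the equality of degrees).
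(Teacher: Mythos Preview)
Your argument is correct and follows the same overall reduction as the paper: pass to the core-free group-theoretic setting, use that a solvable transitive group of prime degree $p$ sits in $\Aff_1(\ff_p)$ (so the Sylow $p$-subgroup has order exactly $p$), and conclude that all index-$p$ subgroups are conjugate. The difference is only in the last step. The paper, inside the proof of Theorem~\ref{thm:prime}, writes $G\cong \ff_p\rtimes C$ with $|C|\mid p-1$, notes $\HG^1(C,\ff_p)=0$, and deduces that $\ff_p$ has a unique complement up to conjugation; you instead observe that $U$ and $V$ are Hall $p'$-subgroups of the solvable group $G$ and invoke P.\ Hall's conjugacy theorem. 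Your route is arguably cleaner here since it does not require first identifying the normal Sylow $p$-subgroup, and your self-contained orbit argument showing $|P|=p$ (normal point stabilizer in a transitive $p$-group lies in the kernel) avoids citing Burnside/Galois altogether. Conversely, the paper's cohomological step is slightly more elementary, needing only Schur--Zassenhaus for an abelian normal subgroup rather than the full Hall theorem. Your final remark, that Sylow-conjugacy enters only through the common Galois closure, is also implicit in the paper's proof, which concludes that \emph{every} two index-$p$ subgroups of $G$ are conjugate.
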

\begin{proof}[Proof of Theorem \ref{thm:prime}]
Let $G:=\Gal(M/\mQ)$, and $U,V\leq G$ be the index-$p$ subgroups fixing $K$ and $L$, respectively. 
Then $G$ acts faithfully on $G/U$ (and on $G/V$) as a degree $p$ permutation group. 
As $p$ is prime,  Burnside's theorem \cite{Mul} shows that either (i) $G$ is solvable, in which case it is a subgroup of $\Aff_1(\mathbb F_p)=\mathbb F_p\rtimes \mathbb F_p^\times$, or (ii) $G$ is doubly transitive, and hence almost simple \cite[Theorem 7.2E]{DM}. 

In case (i), $G$ is isomorphic to $\mathbb F_p\rtimes C$ with $|C|\divides p-1$ and in particular $|C|$ is prime to $p$. 
Hence $\HG^1(C,\mathbb F_p)=0$ and $\mathbb F_p$ has a unique complement under conjugation. 
Thus, every two subgroups of $G$ of index $p$ are conjugate. 

In case (ii), let $S$ be the socle of $G$, and note that since $U$ is of index $p$ and has trivial core, $S\cap U$ is of index $p$ in $S$. We next append to the work of Guralnick \cite{Gur} who classifies, using CFSG, all the finite simple groups $S$ with a subgroup of prime power index. The cases where $S$ has a subgroup of index $p$ are: (a') $S\cong \PSL_2(11)$ with $p=11$; (b') $S\cong \PSL_d(q)$ in the action described in (b) with $p=(q^d-1)/(q-1)$; (c) the alternating group $S\cong A_p$ for arbitrary prime $p$; and (d) $S\cong M_{23}$ with $p=23$ or $S\cong M_{11}$ with $p=11$. 
In (c), the index-$p$ subgroups of the alternating group $A_p$ (resp.\ of the symmetric group $S_p=\Aut(A_p)$) are conjugate. Similarly, the index-$p$ subgroups are conjugate for the Mathieu groups in (d) (which are the unique almost simple groups with the given socle). 
 In (b'), $G$ is as in (b) and $d$ is prime as well \cite{Gur}. Note that $d\geq 3$ since for $d=2$ the index-$p$ subgroups are conjugate. 
 In (a'), the degree-$11$ action of $\PSL_2(11)$ does not extend to a degree-$11$ action of $\PGL_2(11)$, and hence $G=S\cong \PSL_2(11)$ as in (a).  
\end{proof}
\begin{remark}
Note that case (a) indeed gives a counterexample as described in Example \ref{exam:psl11}. Also if for a prime $p$,   \eqref{equ:sum-primes}
has a solution with $d\geq 3$ and a prime power $q$, then indeed $G=\PSL_d(q)$ has two subgroups $U$ and $V$ of index $p$ which are Sylow-conjugate but nonconjugate, see Example \ref{exam:PSLd} above. 

What is unknown is whether $G=\PSL_d(q)$ is indeed always a Galois group over $\mQ$. It is also unknown if there are infinitely many primes for which \eqref{equ:sum-primes} has such solutions $q$ and $d$, see \cite{EGSS} for a discussion. 
But certainly there are some, for example: 
$$ 13 = \frac{3^3-1}{3-1},\, 31 = \frac{5^3-1}{5-1},\,  73 = \frac{8^3-1}{8-1},\text{ and } 1772893 = \frac{11^9-1}{11^3-1}. $$
\end{remark}

We  note that in the class of nilpotent groups Sylow conjugacy implies conjugacy:
\begin{lemma} Let $K$ and $L$ be Sylow-conjugate number fields whose common Galois closure $M/\mQ$ has a nilpotent Galois group $G$. Then $K$ and $L$ are isomorphic.
\end{lemma}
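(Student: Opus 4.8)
The plan is to reduce to group theory via Proposition \ref{prop:quotients} and then exploit the fact that a finite nilpotent group is the direct product of its Sylow subgroups. First I would set $G := \Gal(M/\mQ)$, $U := \Gal(M/K)$ and $V := \Gal(M/L)$; then by Proposition \ref{prop:quotients}(a) the subgroups $U$ and $V$ are Sylow-conjugate in $G$, and by part (b) it suffices to show that $U$ and $V$ are conjugate in $G$. Since $K$ and $L$ are number fields, $M/\mQ$ is a finite Galois extension, so $G$ is a finite nilpotent group, and hence $G=\prod_p G_p$ is the direct product of its Sylow subgroups.

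Next I would observe that every $p$-subgroup of $G$ is contained in $G_p$: the projection of a $p$-element to the factor $G_\ell$ has $p$-power order and so is trivial whenever $\ell\neq p$. In particular $U_p,V_p\leq G_p$ for all $p$. Since a subgroup of a nilpotent group is nilpotent, $U=\prod_p U_p$ and $V=\prod_p V_p$. The key point is that conjugation in $G=\prod_p G_p$ acts coordinatewise: for $g=(g_p)_p\in G$ one has $gU_pg^{-1}=g_pU_pg_p^{-1}$, because the coordinates $g_\ell$ with $\ell\neq p$ centralize $U_p\leq G_p$. Consequently $U_p$ is conjugate to $V_p$ in $G$ if and only if $U_p$ is conjugate to $V_p$ already in $G_p$.

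Finally, Sylow-conjugacy of $U$ and $V$ provides, for each prime $p$, an element $g_p\in G_p$ with $g_pU_pg_p^{-1}=V_p$, and we may take $g_p=1$ for all but finitely many $p$. Setting $g:=(g_p)_p\in G$ we get $gUg^{-1}=\prod_p g_pU_pg_p^{-1}=\prod_p V_p=V$, so $U$ and $V$ are conjugate in $G$, and therefore $K\cong L$ by Proposition \ref{prop:quotients}(b). I do not expect a real obstacle here: the only points needing a line of justification are the containment $U_p\leq G_p$ and the coordinatewise behaviour of conjugation, and everything else is the standard structure theory of finite nilpotent groups. In particular the argument does not use the classification of finite simple groups.
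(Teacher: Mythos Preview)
Your proof is correct and follows essentially the same approach as the paper: decompose $G=\prod_p G_p$, observe that the conjugating element for each $p$ can be taken in $G_p$, and assemble these commuting elements into a single conjugator. Your write-up is more explicit about the justifications (the containment $U_p\leq G_p$, coordinatewise conjugation, and the reduction via Proposition~\ref{prop:quotients}), but the underlying argument is identical.
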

\begin{proof}
As $G$ nilpotent, it is the product $\prod_p G_p$ of its $p$-Sylow subgroups, and hence $U_p=V_p^{x_p}$ are already conjugate by an element $x_p\in G_p$, for every prime $p$. Since the elements $x_p$, $p$ prime, commute, it follows that $U=V^x$, where $x=\prod_p x_p$. 
\end{proof}
We give one more infinite family for which Sylow-conjugacy implies conjugacy.
\begin{prop}
Suppose $G=A\rtimes H$, where $A$ is an abelian group which is irreducible as an $H$-module. 
Let $M/\mQ$ be a $G$-extension and $K$ and $L$ be Sylow-conjugate subfields fixed by complements of $A$ in $G$. 
Then $K$ and $L$ are isomorphic. 
\end{prop}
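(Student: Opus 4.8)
The plan is to reduce the statement to a cohomological computation about complements of $A$ in $G = A \rtimes H$, exploiting the irreducibility of $A$ as an $H$-module. First I would recall the standard dictionary: the $G$-conjugacy classes of complements of $A$ in $G$ are in bijection with $\HG^1(H,A)$, with the split complement $H$ corresponding to the zero class. So $K$ and $L$ correspond to cohomology classes $\alpha, \beta \in \HG^1(H,A)$, and by Proposition \ref{prop:quotients}.(b) it suffices to show $\alpha = \beta$, knowing (by Proposition \ref{prop:quotients}.(a)) that the complements $U, V$ are Sylow-conjugate in $G$.

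Next I would translate Sylow-conjugacy into cohomology. For each prime $p$, a $p$-Sylow subgroup $G_p$ of $G$ meets $A$ in $A_p$ (the $p$-part of $A$) and projects onto a $p$-Sylow $H_p$ of $H$; restriction gives $\res\colon \HG^1(H,A) \to \HG^1(H_p, A_p)$, and the class of $U_p \cap (A_p \rtimes H_p)$ as a complement of $A_p$ in $A_p \rtimes H_p$ is $\res_p(\alpha)$ (and similarly $\res_p(\beta)$ for $V$). The key point: if $U_p$ and $V_p$ are $G$-conjugate, then after conjugating into $A_p \rtimes H_p = N_G(A_p)\cap\ldots$ — more carefully, since both $U_p$ and $V_p$ normalize $A_p$ (being contained in it? no — containing $A_p$), they lie in $N_G(A_p)$; one shows they are conjugate by an element of $N_G(A_p)$, and there the complement classes are precisely $\res_p(\alpha)$ and $\res_p(\beta)$ up to the action of $N_G(A_p)/(A_p\rtimes H_p)$ on $\HG^1$. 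Granting this, Sylow-conjugacy forces $\res_p(\alpha)$ and $\res_p(\beta)$ to lie in a common orbit for every $p$.

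Then I would finish using irreducibility. Because $A$ is a simple $H$-module, it is an elementary abelian $\ell$-group for a single prime $\ell$, so $A = A_\ell$ and only the prime $p = \ell$ is relevant; for $p \neq \ell$ we have $A_p = 0$ and the condition is vacuous. So the whole problem lives at $p = \ell$, and the restriction map $\res\colon \HG^1(H, A) \to \HG^1(H_\ell, A)$ is \emph{injective} (indeed an isomorphism onto its image split off by corestriction/transfer, since $[H:H_\ell]$ is prime to $\ell$ and kills the kernel, while $\HG^1(H,A)$ is an $\ell$-group). Thus $\alpha = \beta$ will follow once I show that $\res_\ell(\alpha)$ and $\res_\ell(\beta)$ are actually \emph{equal}, not merely in a common $N_G(A_\ell)$-orbit. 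This is where irreducibility does the real work: I expect that the orbit issue either collapses because $N_G(A) = G$ acts trivially on $\HG^1(H,A)$ up to the relevant identification, or — more robustly — one argues directly that two Sylow-conjugate complements of a simple module $A$ are $G$-conjugate by reducing to $G$ itself (not a Sylow subgroup), using that $H^1$ detects $G$-conjugacy and that conjugacy of $U_\ell, V_\ell$ in $G$ can be upgraded.

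The main obstacle I anticipate is precisely this last orbit/normalizer subtlety: disentangling "$\res_\ell(\alpha)$ and $\res_\ell(\beta)$ lie in a common $N_G(A_\ell)$-orbit on $\HG^1(H_\ell,A)$" from "$\alpha = \beta$ in $\HG^1(H,A)$." I would handle it by choosing the Sylow subgroups compatibly (pick $G_\ell$ containing $A$, which is automatic since $A \lhd G$ is an $\ell$-group, so $G_\ell = A \rtimes H_\ell$ and $N_G(A_\ell) \supseteq$ everything relevant is not even needed because $A_\ell = A$ is normal in all of $G$). With $A$ normal in $G$, the conjugating element from $U_\ell$ to $V_\ell$ can be taken in $G$ outright, and the complement classes of $U_\ell, V_\ell$ in $A \rtimes H_\ell$ then differ by the image of that element in $\HG^1$, which lands in the image of $\res$; tracing through, $\res_\ell(\alpha) = \res_\ell(\beta)$, and injectivity of $\res$ closes the argument. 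So once normality of $A$ is used to kill the normalizer ambiguity, the rest is the routine cohomology of complements.
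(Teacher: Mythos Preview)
Your cohomological framework is exactly right and matches the paper's approach: complements correspond to classes in $\HG^1(H,A)$, irreducibility forces $A$ to be an elementary abelian $\ell$-group so that only the prime $\ell$ matters, and the restriction $\res_\ell\colon \HG^1(H,A)\to\HG^1(H_\ell,A)$ is injective by the corestriction argument. You also correctly locate the crux: one must upgrade ``$U_\ell$ and $V_\ell$ are $G$-conjugate'' to ``$\res_\ell(\alpha)=\res_\ell(\beta)$'', i.e.\ to conjugacy \emph{inside} $AH_\ell$.

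The gap is in your resolution of that crux. Saying that ``normality of $A$ kills the normalizer ambiguity'' and that the classes ``differ by the image of that element in $\HG^1$'' does not parse into an argument: an arbitrary $g\in G$ with $U_\ell^{\,g}=V_\ell$ need not lie in $AH_\ell$, and conjugation by such $g$ does not act trivially on $\HG^1(H_\ell,A)$, so $G$-conjugacy alone does not force the restricted classes to coincide. What is actually needed---and what the paper does---is this: first arrange $\pi(U_\ell)=\pi(V_\ell)=H_\ell$ (automatic once one writes $V=f_\chi(U)$ for a cocycle $\chi$ with values in $A=\ker\pi$ and sets $V_\ell:=f_\chi(U_\ell)$). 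Then $U_\ell^{\,g}=V_\ell$ forces $\pi(g)\in N_H(H_\ell)$, hence
\[
g\in \pi^{-1}\bigl(N_H(H_\ell)\bigr)=N_U(U_\ell)\cdot A.
\]
Writing $g=na$ with $n\in N_U(U_\ell)$ and $a\in A$ gives $V_\ell=U_\ell^{\,na}=(U_\ell^{\,n})^{a}=U_\ell^{\,a}$, so $U_\ell$ and $V_\ell$ are already conjugate by an element of $A\subseteq AH_\ell$. That is precisely what makes the restricted cocycle a coboundary and yields $\res_\ell(\alpha)=\res_\ell(\beta)$; injectivity of $\res_\ell$ then finishes. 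Your sketch stops just short of this decomposition $g=na$, which is the one genuinely nontrivial step in the proof.
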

\begin{proof}
Let $U=\Gal(M/K)$ and $V=\Gal(M/L)$ be the corresponding complements of $A$ in $G$. Since $A$ is an irreducible $H$-module, $A$ is an elementary abelian $p$-group for some prime $p$. Let $\pi:G\ra H$ the natural projection modulo $A$. 

We first claim that  the restriction map 
$
\res_p:\HG^1(U,A)\ra \HG^1(U_p,A)
$
is injective. Indeed, letting $\cores_p:\HG^1(U_p,A)\ra \HG^1(U,A)$ denote the correstriction map, it is well known that $\cores_p\circ \res_p$ is the multiplication-by-$[U:U_p]$ map. Since $[U:U_p]$ is coprime to $|A|$, this multiplication map is an isomorphism, so that $\res_p$ is injective. 



Since $U$ and $V$ are complements of $A$ in $G$ there is a cocycle $\chi\in \ZG^1(U,A)$ 
for which the homomorphism $f_\chi:U\ra G$, $u\mapsto   u\cdot \chi(u)$ maps $U$ isomorphically to $V$. 
The subgroup $f_\chi(U_p)$ is a $p$-Sylow subgroup of $V$, which we denote by $V_p$. 
Note that since $\Image\chi\in A$, we have $U_pA=V_pA$.  Let $H_p:=\pi(U_p)=\pi(V_p)$. 

Since $U$ and $V$ are Sylow-conjugate, there exists $g\in G$ such that $U_p^g=V_p$. 
Thus $\pi(g)$ is in the normalizer $N_H(H_p)$  of $H_p$ in $H$, so that $g\in \pi^{-1}(N_H(H_p))=N_{U}(U_p)A$, where the latter equality holds since $\pi$ maps $N_U(U_p)$  isomorphically to $N_H(H_p)$  (as $U$ and $U_p$ are mapped isomorphically to $H$ and $H_p$, respectively). 
Writing $g=na$ for $n\in N_{U}(U_p)$ and $a\in A$, we see that $V_p=U_p^{na}=(U_p^{n})^{a}=U_p^{a}$, so that $U_p$ and $V_p$ are conjugate by $a\in A$. 

We claim that the latter implies that the restriction $\chi_p:=\res_p(\chi)$ has a trivial class in $\HG^1(U_p,A)$. Indeed, since $U_p$ and $V_p$ are conjugate in $U_pA=V_pA$, by composing $f_{\chi_p}:U_p\ra V_p$ with inner conjugation by $a^{-1}$, we obtain a map $f_{\chi'_p}:U_p\ra U_p$ for some $\chi'_p\in \ZG^1(U_p,A)$ cohomologically equivalent to $\chi_p$. 
Thus $f_{\chi'_p}(u)=u\chi'_p(u)\in U_p$, so that $f_{\chi'_p}(u)u^{-1}\in A\cap U=1$, that is, $f_{\chi'_p}(u)=u$ and $\chi'_p(u)=1$ for all $u\in U_p$. 

Finally, since by the above claim $\res_p$ is injective, this implies $[\chi]\in \HG^1(U,A)$ is trivial, and hence that $U$ and $V$ are conjugate as desired. 
\end{proof}

Finally, we show that Sylow-conjugate number fields which are ``smaller" than those in Claim \ref{claim:examples} are isomorphic. 

\begin{prop}
Let $K$ and $L$ be two Sylow-conjugate number fields with common Galois closure $M/\mQ$ and Galois group $G=\Gal(M/\mQ)$. 
Assume  that either $[K:\mQ]=[L:\mQ]\leq 6$ or that $|G|<48$. 
Then $K$ and $L$ are isomorphic. 
\end{prop}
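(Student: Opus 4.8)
The plan is to translate the statement into pure group theory via Corollary~\ref{cor:criterion} and Proposition~\ref{prop:quotients}: it suffices to show that if $G$ is a finite group with two Sylow-conjugate subgroups $U,V$ such that either $[G:U]=[G:V]\leq 6$ or $|G|<48$, then $U$ and $V$ are already conjugate in $G$. By Lemma~\ref{lem:quotients}.(a) and the fact (used throughout the paper) that the core of $U$ equals the core of $V$ (Lemma~\ref{lem:core-degree}), we may replace $G$ by $G/\core_G(U)$ and thus assume $U$ is core-free, i.e.\ $G$ acts faithfully and transitively on $G/U$ of degree $n=[G:U]$. In the first case $n\leq 6$, so $G$ embeds in $S_6$ and $|G|\leq 720$; in the second case $|G|<48$ and $[G:U]=|G|/|U|$ is small as well. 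Either way we are reduced to a bounded, finite list of transitive permutation groups.

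First I would dispose of the degree $\leq 6$ case by going through the transitive groups of degrees $2,3,4,5,6$ one at a time (there are very few: the transitive groups of degree $\le 5$ are cyclic, dihedral, $A_4$, $S_4$, $F_{20}$, $A_5$, $S_5$ and their subgroups-up-to-conjugacy realizing the action, and degree $6$ adds $S_6,A_6,\PGL_2(5)\cong S_5, \PSL_2(5)\cong A_5$, $S_4$ acting on $6$ points, $A_4\times C_2$, etc.). For each such $G$ and each point stabilizer $U$, I would check that all subgroups of index $n$ are either conjugate or not Sylow-conjugate. The key leverage is that Sylow-conjugacy forces $|U|=|V|$ and the same Sylow structure, which for these small orders already pins down the isomorphism type of $V$; then one checks conjugacy of the few candidate subgroups directly. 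For degrees $2,3$ the claim is immediate (index-$2$ subgroups are normal, and Theorem~\ref{thm:prime} already handles prime degree including $p=2,3,5$). Degree $4$ and $6$ are where a small amount of casework is genuinely needed, but the orders involved ($|G|\le 24$ for degree $4$, $|G|\le 720$ for degree $6$) keep it finite and short, and a MAGMA check suffices to confirm there is no exceptional pair.

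For the case $|G|<48$, the point is that any counterexample would, after passing to $G/\core_G(U)$, give a transitive faithful action of a group of order $<48$ with nonconjugate Sylow-conjugate point stabilizers; since $[G:U]$ divides $|G|<48$ and a degree $\le 6$ action is already excluded by the previous paragraph, we may assume $7\le [G:U] \le 47$, whence $|U| = |G|/[G:U]$ is very small. If $[G:U]\ge |G|/2$ then $|U|\le 2$ and $U,V$ are cyclic of the same order, hence (being conjugate to their unique Sylow subgroups, matched up by Sylow-conjugacy) conjugate; this already covers most of the range because $|G|<48$ forces $[G:U]$ large unless $|U|$ is moderate. The remaining finitely many pairs $(|G|,[G:U])$ with $2<|U|$, namely those with $|U|\in\{3,4,6,\dots\}$ and $|G|<48$, are checked by the same direct inspection, again exploiting that Sylow-conjugacy determines the isomorphism type of $U$ and that these groups are too small to contain nonconjugate isomorphic subgroups with matching Sylow subgroups in a faithful transitive action; the group $\GL_2(3)$ of order $48$ in the last Example of Section~\ref{sec:poly} shows the bound is sharp.

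The main obstacle is not conceptual but the bookkeeping in degree $6$: here $|G|$ can be as large as $720$, there are $16$ transitive groups of degree $6$, and several (e.g.\ $S_4$ in its two inequivalent degree-$6$ actions, $A_6$, $S_6$, $\PGL_2(5)$) have enough subgroups that one must genuinely verify that Sylow-conjugate index-$6$ subgroups are conjugate rather than merely isomorphic. I expect to handle this with a uniform argument where possible --- e.g.\ when $U$ is a maximal subgroup and $G$ acts primitively, nonconjugate Sylow-conjugate subgroups would force a configuration ruled out for $|G|\le 720$ by Theorem~\ref{thm:prime}-style input or by the fact that the relevant simple groups ($A_5,A_6$) have no two nonconjugate isomorphic subgroups of index $\le 6$ with equal Sylow subgroups --- and fall back to explicit MAGMA enumeration for the imprimitive cases. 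In all cases the conclusion is that no exceptional pair arises below the thresholds, which is exactly the assertion.
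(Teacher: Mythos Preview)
Your proposal is correct and follows essentially the same route as the paper: reduce to core-free Sylow-conjugate subgroups $U,V\leq G\leq S_d$, observe that prime-power order of $U$ settles it immediately, invoke Theorem~\ref{thm:prime} for prime degree, and handle the remaining small cases by direct inspection (with MAGMA as backup). The paper's proof differs only in that it carries out the degree-$6$ casework explicitly by hand---in particular showing that the two nonconjugate copies of $S_5$ (resp.\ $A_5$) in $S_6$ (resp.\ $A_6$) fail to be Sylow-conjugate because their $3$-Sylows have different fixed-point behaviour, and giving a short normalizer argument for the $S_3\wr C_2$ subcase---and organizes the $|G|<48$ case slightly more cleanly by noting that once $d\leq 6$ and prime $d$ are excluded, $d\geq 8$ forces $|U|<6$ and hence $|U|$ is a prime power.
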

\begin{proof}
Set $d:=[K:\mQ]=[L:\mQ]$ and first assume $d\leq 6$. 
By Proposition \ref{prop:quotients} and Lemma \ref{lem:core-degree}, the subgroups $U=\Gal(M/K)$ and $V=\Gal(M/L)$ are Sylow-conjugate subgroups of $G$ of index $d$ with trivial core.  In particular, we may identify $G$ with a subgroup of $S_d$. 
We claim that $U$ and $V$ are conjugate. We checked this using MAGMA, as well as analyzed by hand as follows:

First note that if the order of $U$ and $V$ is a power of a prime, as they are Sylow-conjugate, they are conjugate. 
Henceforth,  assume 
$|U|,|V|$ are not prime powers. 

For $d\leq 2$, one has $U,V\lhd G$ and hence $U=V$. 
For $d=3$, since $U,V\leq G$ have trivial core,  $G=S_3$, and $U,V$ are $2$-Sylow subgroups of $G$. As $U,V$ are Sylow-conjugate they are conjugate. For $d=4$: if $G=S_4$, it has a unique conjugacy class of index $4$ subgroups. For $G\lneq S_4$, as $d=4$, it follows that $|U|,|V|$ are prime powers. 
The case $d=5$ is covered by Theorem \ref{thm:prime} as $d$ is a prime. 

The case $d=6$ is more interesting:
If $G=S_6$ or $A_6$, then $G$ indeed has two different conjugacy classes of index $6$ subgroups. One is $U=S_5$ (resp.\ $U=A_5$) and the second is the image $V$ of the action $S_5\ra \Sym\{P_1,\ldots,P_6\}=S_6$ of $S_5$ on its six $5$-Sylow subgroups $P_1,\ldots,P_6$. 
Indeed, $U$ and $V$ are nonconjugate in $S_6$ (resp.\ $A_6$) since $U$ fixes a point while $V$ does not. 
But at the same time, the $3$-Sylow subgroups $U_3$ and $V_3$ of $U$ and $V$, which are of order $3$, are nonconjugate as well since $U_3$ has a fixed point while $V_3$ does not. 

Henceforth assume $G\lneq S_6$ is a proper subgroup other than $A_6$. 
The maximal subgroups $G\leq S_6$ (resp.\ $G\leq A_6$) satisfying the above are of order $48$, $72$, and $120$ (resp.\ $24$,  $36$, and $60$).  

The only subgroup of order $120$ (resp.\ $G\leq A_6$ of order $60$)  is $S_5$ (resp.\ $A_5$). 
If $G=S_5$ (resp.\ $A_5$), it has a unique conjugacy class of index $6$ subgroups, namely the Frobenius group of order $20$ (resp.\ Dihedral group of order $10$) normalizing a $5$-Sylow subgroup. 
If $G\lneq S_5$ (resp.\ $G\lneq A_5$) is a proper subgroup other than $A_5$, then $|G|/6$ is a prime power, so that in this case as well $U$ and $V$ are conjugate. 

The only subgroup of $S_6$ of order  $72$ (resp.\ of $A_6$ of order $36$) is the stabilizer of a partition into two blocks of size $3$, 
  so that here we may assume $G\leq S_3\wr C_2=(S_3\times S_3)\rtimes C_2$. 
The only such subgroups for which $|G|/6$ is not a prime power are $S_3\wr C_2$ and its subgroups of order $36$. 
Letting $G$ be one of those groups, 
 $G_3=C_3\times C_3$ is normal in $G$, and hence the subgroup $U_3=G_3\cap U$ of order $3$ is normal  $U_3\lhd U$. Since $U_3$ is normalized by $U$, a subgroup of index $6$, and by $G_3$,  the normalizer $N_G(U_3)$  is of index $\leq 2$ in $G$. 
 If $N_G(U_3)=G$, i.e.\ $U_3\lhd G$, then $V_3=U_3$. In this case, since $U_2=V_2^x$ for some $x\in G$, one has $V^x=V_3^xV_2^x=U_3U_2=U$. 
 Otherwise, $[G:N_G(U_3)]=2$, and $U_2$ is also a $2$-Sylow subgroup of $N_G(U_3)$. Similarly $V_2$ is a $2$-Sylow of $N_G(V_3)$. 
Moreover, as $V_3^x=U_3$ for some $x\in G$, one has  $N_G(V_3)^x=N_G(U_3)$, and so $V_2^x$ is also a $2$-Sylow subgroup of $N_G(U_3)$. Thus $V_2^{xy}=U_2$ for some $y\in N_G(U_3)$, so that 
$$V^{xy}=V_2^{xy}V_3^{xy}=U_2U_3^y=U_2U_3=U.$$
 
Finally if $G\leq S_6$ is of order $48$ (resp.\ $G\leq A_6$ is of order $24$) or a subgroup of it, then $|G|/6$ is of prime power order, completing the proof in case $d\leq 6$. 

Assume $|G|<48$ (and $d$ is arbitrary), and $U,V\leq G$ are Sylow-conjugate. 
Note that when  $d:=[G:U]=[G:V]$ is either prime or of degree $4$ or $6$, then the claim follows from Theorem \ref{thm:prime} and the first assertion of the proposition. Thus to deduce the second assertion, it suffices to note that if $|G|<48$ and $[G:U]=[G:V]\geq 8$ is not a prime, then $|U|,|V|$ are  prime powers, so that $U$ and $V$ are conjugate. 

\end{proof}
\section{Between Sylow-conjugacy and arithmetic equivalence}\label{sec:art}
Two number fields $K$ and $L$ are said to be arithmetically equivalent if their Dedekind zeta function are equal $\zeta_K(s)=\zeta_L(s)$. It is well known \cite{Per} that this happens if and only if $K$ and $L$ have a common Galois closure $M/\mQ$ satisfying the following: The subgroups $U=\Gal(M/K)$ and $V=\Gal(M/L)$ of $G=\Gal(M/\mQ)$ satisfy $|C\cap U|=|C\cap V|$ for every conjugacy class $C$ of $G$. 
So arithmetic equivalence, just like Sylow conjugation is a weak form of conjugation. This and other\footnote{We  show above that Sylow-conjugacy implies conjugacy if the degree is $<7$ or, in case $G$ is solvable, if the degree is a prime. Similar results are also proved by Perlis for arithmetic equivalence: The case of degree $<7$  is given in  \cite[Theorem 3]{Per}, while the case where $G$ is solvable and the degree is a prime $p$ is covered by \cite[Theorem 2(g)]{Per} (since stabilizers are of order coprime to $p$).} similarities may suggest that the two properties are equivalent. 
In what follows we will show that this is not the case (Examples \ref{exam:A} and \ref{exam:B}). 
Example \ref{exam:C} will show that there are number fields which are Sylow-conjugate as well as arithmetically equivalent and still not isomorphic. 
\begin{example}\label{exam:A}
Let $p$ be an odd prime, $U=C_{2p}$ the cyclic group of order $p$, and $V=D_{2p}$ the Dihedral group of order $2p$. Embed them regularly in $\Sym(2p)$. Every element of $C_{2p}$ (resp.\ $D_{2p}$) of order $2$ gives rise to a product of $p$ distinct transpositions, and an element of order $p$ inducs a product of two disjoint $p$-cycles. Now, $U$ and $V$ are therefore Sylow-conjugate in $\Sym(2p)$, but are clearly not arithmetically equivalent, as $C_{2p}$ has an element of order $2p$ but $D_{2p}$ does not. As $\Sym(2p)$ is a Galois group over $\mQ$, this induces (as in Example \ref{exam:regular}) pairs of number fields which are Sylow-conjugate but not arithmetically equivalent. 
\end{example}
\begin{example}\label{exam:B}
Let $p$ be a prime, $U=C_p\times C_p\times C_p$ and 
$$ V = \left\{ \left[\begin{array}{ccc}
 1 & a & b \\
 0 &  1 & c \\
 0& 0 & 1 
\end{array}\right]\,|\, a,b,c\in \mathbb F_p\right\}$$ the Heisenberg group over the field $\mathbb F_p$. 
Embed $U$ and $V$ regularly into $\Sym(p^3)$. 
Every nontrivial element of $U$ (resp.\ $V$) when acting on $U$ (resp.\ $V$) induces a permutation which is a product of $p^2$ disjoint $p$-cycles. 
Then by the above criterion, $U$ and $V$ are arithmetically equivalent, but they are clearly not Sylow-conjugate, as they are nonisomorphic $p$-groups. 
\end{example}
\begin{example}\label{exam:C}
Let us take a second look at Example \ref{exam:PSLd}. We showed that these two maximal parabolic subgroups $U,V\leq G$ are Sylow-conjugate  and nonconjugate. We claim now that the examples are also arithmetically equivalent. Indeed, the above group theoretic criterion is equivalent to saying that the linear representation of $G$ on $\mathbb C[G/U]$  is isomorphic to the one  on $\mathbb C[G/V]$. (Note that $U$ and $V$ are conjugate if and only if the permutational representations are isomorphic). 

To show that the two linear representations are isomorphic it suffices to show that $\tr(g_{|\mathbb C[G/U]})=\tr(g_{|\mathbb C[G/V]})$ for every $g\in G$. Next note that:
\begin{enumerate}
\item[(a)] In the linear representation induced by a permutational representation $\tr(g)$ is equal to the number of fixed points. 
\item[(b)] In our case the action of $G$ on $G/U$ is equivalent to the action of $G$ on the $1$-dimensional subspaces of $\mathbb F_q^d$, while the action on $G/V$ is equivalent to that on hyperplanes. 
\end{enumerate}
Whenever $g\in G$ preserves a $1$-dimensional subspace $L$, its transpose $g^t$ preserves the hyperplane $L^\perp$ 
perpendicular to $L$. Thus, $\tr(g_{|\mathbb C[G/U]})=\tr(g^t_{|\mathbb C[G/V]})$. But $g$ and $g^t$ are conjugate in $\PSL_d(q)$ and hence $\tr(g_{|\mathbb C[G/U]})=\tr(g_{\mathbb C[G/V]})$, and $U$ and $V$ give rise to arithmetically equivalent  nonisomorphic number fields, provided $\PSL_d(q)$ is a Galois group. This is the case at least for $\PSL_3(2)$, which gives the examples in Claim \ref{claim:examples}.(b). 
\end{example}


\begin{thebibliography}{9}
\bibitem{DM} 
J.\ D.\ Dixon, B.\ Mortimer, 
Permutation Groups. 
GTM 163, Springer-Verlag, New York, 1996.
\bibitem{EFM} D.\ W.\ Erbach, J.\ Fischer, J.\ McKay, Polynomials with $PSL(2,7)$ as Galois group. J.\ Number Theory 11 (1979), 69--75. 
\bibitem{EGSS}
D.\ Estes, R.\ M.\ Guralnick, M.\ Schacer, E.\ Straus
Equations in prime powers. 
Pacific J.\ Math.\ 118 (1985), 359--387.
\bibitem{Gur} 
R.\ M.\ Guralnick,
Subgroups of Prime Power Index in a Simple Group.
J.\ Algebra 81 (1983), 304--311.
\bibitem{Hall} M.\ Hall, The Theory of Groups. The Macmillan Company, New York, N.Y. 1959. 
\bibitem{Lang} S.\ Lang, Introduction to modular forms.  Corrected reprint of the 1976 original. Grundlehren der mathematischen Wissenschaften, 222. Springer-Verlag, Berlin, 1995. 
\bibitem{MM85}
 G.\ Malle, H.\ Matzat, Realisierung von Gruppen $\PSL_2(\mathbb F_{11})$ als Galoisgruppen \"uber $\mQ$. Math.\ Ann.\ 272 (1985), 549--565. 
\bibitem{Mul}
P.\ M\"{u}ller, Permutation groups of prime degree, a quick proof of Burnside's theorem. 
Archiv der Mathematik 85 (2005), 15--17.
\bibitem{Neu} J.\  Neukirch,  Kennzeichnung der $p$-adischen und der endlichen algebraischen Zahlk\"orper. Invent.\ Math.\ 6 (1969), 296--314. 
\bibitem{Per} R.\ Perlis, On the equation $\zeta_K(s)=\zeta_{K'}(s)$. J.\ Number Theory 9 (1977), 342--360.
\bibitem{PT} F.\ Pop, A.\ Topaz, Towards a minimalistic Neukirch-Uchida Theorem. Talk at the MFO workshop ``Homotopic and Geometric Galois Theory", 3/2021. 
\bibitem{ST} M.\ Saidi, A.\ Tamagawa, The $m$-step solvable anabelian geometry of number fields. Preprint, arXiv:1909.08829. 
\bibitem{Trin}W.\ Trinks, Ein Beispiel eines Zahlk\"orpers mit der Galoisgruppe $\PSL(3, 2)$ \"uber $\mQ$. 
Manuscript, Universitat Karlsruhe, 1968. 
\bibitem{Uch} K.\ Uchida, Isomorphism of Galois groups, Math.\ Soc.\ Japan 28 (1976), 617--620. 
\bibitem{Uch2} K.\ Uchida, Isomorphisms of Galois groups of solvably closed Galois extensions.
Tohoku Math.\ J.\ 31 (1979),  359--362.
\end{thebibliography}
\end{document}